\newtheorem{theorem}{Theorem}
\newtheorem{problem}{Problem}
\author
{
Raphael Steiner 
}
\thanks{Department of Computer Science, Institute of Theoretical Computer Science, ETH Z\"{u}rich, Switzerland,  \texttt{raphaelmario.steiner@inf.ethz.ch}. The author was supported by an
ETH Zurich Postdoctoral Fellowship.}
\date{\today}
\title{Subdigraphs of prescribed size and outdegree}
\begin{document}
\maketitle

\begin{abstract}
In 2006, Noga Alon~\cite{alon} raised the following open problem: Does there exist an absolute constant $c>0$ such that every $2n$-vertex digraph with minimum out-degree at least $s$ contains an $n$-vertex subdigraph with minimum out-degree at least $\frac{s}{2}-c$~?

In this note, we answer this natural question in the negative, by showing that for arbitrarily large values of $n$ there exists a $2n$-vertex tournament with minimum out-degree $s=n-1$, in which every $n$-vertex subdigraph contains a vertex of out-degree at most $\frac{s}{2}-\left(\frac{1}{2}+o(1)\right)\log_3(s)$. 
\end{abstract}

\section{Introduction}
The following open problem was raised by Alon in 2006 in the article~\cite{alon} on digraph splitting. 
\begin{problem}[cf.~\cite{alon}, Problem 4.1]
    Does there exist an absolute constant $c>0$ such that the following holds for all pairs of natural numbers $(n,s)$?

\medskip

    Every $2n$-vertex digraph $D$ of minimum out-degree at least $s$ contains an $n$-vertex subdigraph of minimum out-degree at least $\frac{s}{2}-c$. 
\end{problem}

Using an elegant probabilistic argument, Alon proved in~\cite{alon} that a weaker bound holds, as follows: Every $2n$-vertex digraph of minimum out-degree at least $s$ contains an $n$-vertex subdigraph of minimum out-degree at least $\frac{s}{2}-O(\sqrt{s}\sqrt{\log s})$. 

In this short note, we resolve Problem~1 in the negative, by proving the following result via an explicit construction.

\begin{theorem}\label{statement}
    For an infinite sequence of numbers $n\in \mathbb{N}$, there exists a $2n$-vertex tournament whose minimum out-degree is $s=n-1$, while the minimum out-degree of every $n$-vertex subdigraph is at most $\frac{s}{2}-(\frac{1}{2}+o(1))\log_3(s)$. 
\end{theorem}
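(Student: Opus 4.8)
The plan is to realise the tournament as an iterated cyclic blow-up of the directed triangle, with the parameter tuned so that the vertex count is even. Define a tournament $T_r$ on $3^r+1$ vertices recursively: $T_0$ is a single arc, and $T_r$ is obtained by splitting the vertex set into three parts $A,B,C$ with $|A|=|B|=3^{r-1}$ and $|C|=3^{r-1}+1$, directing all arcs $A\to B$, $B\to C$, $C\to A$, letting $A$ and $B$ each induce a copy of the regular lexicographic power $R_{r-1}$ of the directed triangle (a regular tournament on $3^{r-1}$ vertices with out-degree $(3^{r-1}-1)/2$), and letting $C$ induce a copy of $T_{r-1}$. With $n:=(3^r+1)/2$ the tournament $T_r$ has exactly $2n$ vertices, and a one-line induction on the degrees shows that every out-degree lies in $\{(3^r-1)/2,(3^r+1)/2\}$; hence $T_r$ is near-regular with minimum out-degree exactly $s=(3^r-1)/2=n-1$, as the theorem demands.

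It is convenient to pass from out-degrees to imbalances. For an $n$-subset $S$ write $\sigma(S):=\max_{v\in S}\bigl(d^-_S(v)-d^+_S(v)\bigr)$; since $d^+_S(v)+d^-_S(v)=|S|-1$, the vertex of smallest out-degree is exactly the vertex of largest imbalance, so that $\delta(S)=\tfrac12(n-1-\sigma(S))$, where $\delta(S)$ denotes the minimum out-degree of the induced subtournament on $S$. Because $s=n-1$ and a short computation gives $r=(1+o(1))\log_3 s$, the theorem is equivalent to the clean assertion that every $n$-subset $S$ of $T_r$ satisfies $\sigma(S)\ge r$.

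To control $\sigma(S)$ I would unfold the recursive structure. Setting $S_A=S\cap A$ and $a=|S_A|$, and similarly for $B,C$, the cyclic orientation $A\to B\to C\to A$ means that a vertex of $A$ picks up $b$ extra out-neighbours and $c$ extra in-neighbours from the other two parts, and likewise for $B$ and $C$; hence
\[
\sigma(S)=\max\!\bigl((c-b)+\sigma(S_A),\,(a-c)+\sigma(S_B),\,(b-a)+\sigma(S_C)\bigr),
\]
the maximum being over the nonempty parts. The decisive feature is that the three offsets $c-b$, $a-c$, $b-a$ sum to zero, so their maximum is always nonnegative and vanishes only when $a=b=c$. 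Consequently the minimum of $\sigma$ over all $m$-subsets obeys a min--max recurrence in which every level whose sizes cannot be split into three equal thirds is charged an additional $+1$, and one expects this quantity to equal the number of nonzero digits in the balanced-ternary expansion of $m$, minus one; for $n=(3^r+1)/2$, which has exactly $r+1$ nonzero balanced-ternary digits, this gives precisely $r$.

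The heart of the argument, and the step I expect to be the main obstacle, is the inductive lower bound $\sigma(S)\ge r$. The tension is that the adversary can kill the internal imbalance of a part by giving it a size that is a power of $3$ (where $R_k$ contains a regular subtournament with $\sigma=0$), but the identity $n=3\cdot\tfrac{3^{r-1}+1}{2}-1$, i.e.\ $n_r=3n_{r-1}-1$, prevents the three part-sizes from forming a perfectly balanced split; zeroing one internal imbalance then forces a large positive offset in another part. Making this trade-off quantitative across all admissible splits---while simultaneously tracking the two types of building block, the regular pieces $R_k$ and the recursive pieces $T_k$---is the technical core. Once $\sigma(S)\ge r$ is established for every $n$-subset, substituting into $\delta(S)=\tfrac12(n-1-\sigma(S))$ and using $\tfrac r2=(\tfrac12+o(1))\log_3 s$ yields $\delta(S)\le\frac s2-(\tfrac12+o(1))\log_3 s$ for every $n$-vertex subdigraph, completing the proof.
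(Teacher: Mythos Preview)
Your outline identifies the right recursive skeleton and the useful reformulation via the imbalance $\sigma$, but it leaves the decisive step entirely unproved: you explicitly call the bound $\sigma(S)\ge r$ ``the technical core'' and ``the main obstacle'', and then never carry it out. The balanced-ternary heuristic (``one expects this quantity to equal the number of nonzero digits\ldots'') is suggestive, but it is not an argument, and your construction is not even the pure lexicographic cube for which that heuristic is formulated---part $C$ carries a different tournament $T_{r-1}$, so any induction must simultaneously maintain statements about both building blocks $R_k$ and $T_k$. You acknowledge this complication but give no indication of how the two interlock, or of why the adversary cannot exploit the asymmetry between the $R$-parts and the $T$-part to beat the bound by one. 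As written, the proposal is a plan, not a proof.

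By contrast, the paper sidesteps exactly this difficulty. Instead of forcing an even vertex count from the start, it works with the \emph{homogeneous} recursive tournament $T_k$ on $3^k$ vertices, where all three parts are copies of $T_{k-1}$; only at the very end does it delete a single vertex to reach $2n=3^k-1$. Because the recursion is homogeneous, the induction is a clean two-case split: among the three part-sizes $x_A,x_B,x_C$, either two are at least $\tfrac{3^k+1}{2}$ or two are at most $\tfrac{3^k-1}{2}$, and in each case one term of your max-formula can be bounded directly using the inductive hypothesis on a single copy of $T_{k-1}$. No second building block, no balanced-ternary bookkeeping. If you want to rescue your approach, the cleanest fix is to imitate this: build on $3^r$ vertices, prove the subset bound there, and remove a vertex afterwards rather than baking the parity adjustment into the recursion.
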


Following the notation from~\cite{alon}, for every $s\in \mathbb{N}$ denote by $d(s)$ the largest integer such that every $2n$-vertex digraph of minimum out-degree at least $s$ contains an $n$-vertex subdigraph of minimum out-degree at least $d(s)$. Alon's result in~\cite{alon} and Theorem~\ref{statement} yield $$\Omega(\log s) < \frac{s}{2}-d(s) < O(\sqrt{s}\sqrt{\log s}).$$ It remains an interesting open problem to close the gap between the lower and upper bounds. 

\paragraph{\textbf{Terminology}}
All digraphs considered in this paper have no loops or parallel arcs. For a digraph $D$ we denote by $V(D)$ its vertex set and by $A(D)\subseteq \{(u,v)\in V(D)^2|u \neq v\}$ its arc set. For a subset $X \subseteq V(D)$ of vertices we denote by $D[X]$ the subdigraph of $D$ induced by $X$, i.e.,  consisting of the vertex set $X$ and all arcs of $D$ going between vertices of $X$. By $\delta^+(D)$, we denote the \emph{minimum out-degree} of $D$, which is the smallest out-degree occurring in $D$ if $D$ is non-empty, and defined as $0$ when $D$ is the empty digraph.

\section{Proof of Theorem~\ref{statement}}
Theorem~\ref{statement} is an immediate consequence of the following result.

\begin{theorem}\label{thm:main}
For every integer $k \ge 0$ there exists a tournament $T_k$ on $3^k$ vertices in which every vertex has out-degree $\frac{3^k-1}{2}$, and such that the following holds: For every set $X \subseteq V(T_k)$ of size $|X|\le \frac{3^k-1}{2}$ we have $\delta^+(T_k[X]) \le \frac{\frac{3^k-1}{2}-k}{2}$.  
\end{theorem}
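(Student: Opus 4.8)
The plan is to construct $T_k$ by a ternary recursion and to prove a statement slightly stronger than Theorem~\ref{thm:main} by induction on $k$, tracking the minimum out-degree of induced subdigraphs of \emph{every} size, not just those of size at most $\frac{3^k-1}{2}$.

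Write $s_k=\frac{3^k-1}{2}$. I set $T_0=K_1$, and build $T_k$ from three disjoint copies $A,B,C$ of $T_{k-1}$ by inserting all arcs from $A$ to $B$, from $B$ to $C$, and from $C$ to $A$. A vertex of $A$ then has $\frac{3^{k-1}-1}{2}$ out-neighbours inside $A$, all $3^{k-1}$ vertices of $B$ as out-neighbours, and none in $C$, so its out-degree is $\frac{3^{k-1}-1}{2}+3^{k-1}=\frac{3^k-1}{2}=s_k$; by symmetry $T_k$ is regular of out-degree $s_k$, and the cyclic relabelling $A\mapsto B\mapsto C\mapsto A$ is an automorphism. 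The same description of out-neighbourhoods yields the crucial additive recursion: for $X\subseteq V(T_k)$ with $X_A=X\cap A$, $X_B=X\cap B$, $X_C=X\cap C$ of sizes $a,b,c$, every vertex of $X_A$ has exactly $b$ out-neighbours outside $A$, so its out-degree in $T_k[X]$ equals its out-degree in $T_{k-1}[X_A]$ plus $b$; minimising over the nonempty parts gives
\[
\delta^+(T_k[X])=\min\bigl\{\delta^+(T_{k-1}[X_A])+b,\ \delta^+(T_{k-1}[X_B])+c,\ \delta^+(T_{k-1}[X_C])+a\bigr\},
\]
a part contributing no vertices being omitted from the minimum.

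The inductive invariant I would carry is: for every $X\subseteq V(T_k)$ of size $m$,
\[
\delta^+(T_k[X])\le \tfrac{s_k-k}{2}+\max\{0,\,m-s_k\}.
\]
For $m\le s_k$ this is exactly Theorem~\ref{thm:main}, but the extra term $\max\{0,m-s_k\}$ is what makes the induction close: when I apply the bound at level $k-1$ to a part too large to be governed by the plain estimate $\frac{s_{k-1}-(k-1)}{2}$, the linear tail $m-s_{k-1}$ still provides usable control. Feeding the level-$(k-1)$ invariant into the additive recursion and using $\frac{s_k-k}{2}=\frac{s_{k-1}-(k-1)}{2}+s_{k-1}$, the step reduces, after cancelling the common constant and writing $t=s_{k-1}$ (so $s_k=3t+1$ and every part has size at most $3^{k-1}=2t+1$), to the purely arithmetic claim that for all nonnegative integers $a,b,c\le 2t+1$ with $a+b+c=m$,
\[
\min\bigl\{(a-t)_{+}+b,\ (b-t)_{+}+c,\ (c-t)_{+}+a\bigr\}\le t+\max\{0,\,m-3t-1\},
\]
where $x_{+}=\max\{0,x\}$. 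I would prove this by splitting on $m$. For $m\le 3t+1$ the right-hand side is $t$; assuming every term exceeds $t$, at most two of $a,b,c$ can exceed $t$, and in the decisive case of exactly two large parts, say $a,b>t$, one has $b+c\le m-a\le (3t+1)-(t+1)=2t$, whence $(b-t)_{+}+c=(b+c)-t\le t$, a contradiction (the cyclic symmetry of the three expressions handles the other placements, and zero or one large part is easier). For $m>3t+1$ the right-hand side is $m-2t-1$; ordering the parts so that the largest $M$ satisfies $M\ge t+1$, I suppose all present terms exceed $m-2t-1$ and, using $M\le 2t+1$, derive simultaneous inequalities that collapse to $m\le 3t$, again a contradiction. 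The base case $k=0$ is trivial.

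I expect the main obstacle to be locating the correct strengthening of the statement rather than any single computation: the naive induction that only controls subsets of size at most $s_k$ breaks down precisely when the adversary places two parts of size just above $s_{k-1}$, since then no two cyclically consecutive parts are both small and the averaging bound on the large parts is too weak. It is the extended bound with the $\max\{0,m-s_k\}$ tail, together with the structural constraint that no part exceeds $3^{k-1}$ vertices, that resolves this case; verifying the elementary inequality in the extended range $m>3t+1$ is the most delicate step, and it is there that the part-size bound is indispensable.
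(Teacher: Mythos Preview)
Your construction and the additive recursion are exactly those of the paper, and your strengthened invariant $\delta^+(T_k[X])\le \frac{s_k-k}{2}+\max\{0,m-s_k\}$ is a clean repackaging of what the paper does in its Case~1 (there the paper takes a subset of size exactly $s_{k-1}$ and adds back the excess one vertex at a time, which amounts to the same linear tail).

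There is, however, a genuine gap. Your ``purely arithmetic claim'' for $m\le 3t+1$ is \emph{false} when a part is empty while its cyclic predecessor is full. Take $a=0$, $c=2t+1$ and any $0\le b\le t$: then $m\le 3t+1$, the term indexed by $A$ is absent, and the two present terms are $(b-t)_{+}+c=c>t$ and $(c-t)_{+}+a=t+1>t$, so the minimum over present terms exceeds $t$. This is precisely the ``one large part'' situation you dismissed as easier; the argument you sketched there silently uses the term $(a-t)_{+}+b=b\le t$, which is not present when $a=0$. The underlying reason the reduction breaks is that your level-$(k-1)$ invariant applied to a full part ($c=2t+1$) only gives $\delta^+\le \frac{t-(k-1)}{2}+t+1$, which is too weak by exactly one.

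The paper avoids this by treating $\min\{a,b,c\}=0$ as a separate preliminary case: if, say, $X_B=\emptyset$ and $X_A\neq\emptyset$, any vertex of $X_A$ has all its out-neighbours in $X$ inside $A$, hence at most $s_{k-1}=t$ of them, and $t\le \frac{s_k-k}{2}$ (equivalently $2k-1\le 3^{k-1}$). Once you add this one-line case and then assume $a,b,c\ge 1$, your arithmetic claim for $m\le 3t+1$ becomes correct and the rest of your outline goes through.
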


To deduce Theorem~\ref{thm:main} from Theorem~\ref{statement}, pick $k \ge 0$ arbitrarily, let $n:=\frac{3^k-1}{2}$, and let $D_k$ be the $2n$-vertex tournament obtained from $T_k$ by deleting an (arbitrarily selected) vertex. Since deleting a vertex can lower the out-degree by at most $1$, we have that $D_k$ is of minimum out-degree $s:=n-1=\frac{3^{k}-1}{2}-1$. Furthermore, for every subdigraph $D' \subseteq D_k$ on $n=\frac{3^k-1}{2}$ vertices, denoting by $X$ the set of vertices in $D'$, we have that $\delta^+(D') \le \delta^+(D_k[X])=\delta^+(T_{k}[X])\le \frac{\frac{3^k-1}{2}-k}{2}=\frac{s}{2}-(\frac{1}{2}+o(1))\log_3(s)$. Thus, the statement of Theorem~\ref{statement} follows from Theorem~\ref{thm:main}. 

\medskip

Let us now go about proving Theorem~\ref{thm:main}. 
The tournaments $(T_k)_{k \ge 0}$ have the following simple recursive definition: 
\begin{itemize}
    \item $T_0$ is the one-vertex-tournament. 
    \item For every integer $k \ge 0$, the tournament $T_{k+1}$ is obtained from the disjoint union of $3$ isomorphic copies of $T_k$ on vertex sets $A_k, B_k, C_k$ by adding all possible arcs from $A_k$ to $B_k$, from $B_k$ to $C_k$ and from $C_k$ to $A_k$.
\end{itemize}

It is immediate from this definition that for every $k \ge 0$ the digraph $T_k$ is a tournament on $3^k$ vertices and that every vertex has the same in- and out-degree, hence, the out-degree of each vertex equals $\frac{3^k-1}{2}$. We now prove the statement of Theorem~\ref{thm:main} by induction on $k$. 

\begin{proof}[Proof of Theorem~\ref{thm:main}]
For $k=0$, every induced subdigraph of $T_1$ has minimum out-degree $0$ by definition, which is equal to $\frac{\frac{3^0-1}{2}-0}{2}$, and thus the induction basis holds. For the induction step, assume that for some integer $k \ge 0$ we have established that $\delta^+(T_k[X]) \le \frac{\frac{3^k-1}{2}-k}{2}$ for every $X \subseteq V(T_k)$ of size at most $\frac{3^k-1}{2}$, and let us show the corresponding statement for $T_{k+1}$. 

So let $X \subseteq V(T_{k+1})=A_k \cup B_k \cup C_k$ be given arbitrarily such that $|X| \le \frac{3^{k+1}-1}{2}$. W.l.o.g assume $X \neq \emptyset$.  Denote $X_A:=X \cap A_k, X_B:=X \cap B_k, X_C:=X \cap C_k$, and let $x_A:=|X_A|, x_B:=|X_B|, x_C:=|X_C|$, so that $x_A+x_B+x_C=|X| \le \frac{3^{k+1}-1}{2}$. 

Consider first the case that $\min\{x_A,x_B,x_C\}=0$. Then possibly after relabeling, we may assume w.l.o.g. that $X_B=\emptyset$ and $X_A \neq \emptyset$. Pick a vertex $x \in X_A$. Since all its out-neighbors within $X$ are also contained in $A_k$, and since the copy of $T_k$ induced on $A_k$ is $\frac{3^k-1}{2}$-out-regular, this certifies that $\delta^+(T_{k+1}[X])\le \frac{3^{k}-1}{2}\le \frac{\frac{3^{k+1}-1}{2}-(k+1)}{2}$, as desired. 

Next, suppose that $x_A, x_B, x_C>0$. We then have, by construction of $T_{k+1}$, that
$$\delta^+(T_{k+1}[X])=\min\{\delta^+(T_{k+1}[X_A])+x_B,\delta^+(T_{k+1}[X_B])+x_C,\delta^+(T_{k+1}[X_C])+x_A \}.$$ 
Among the three numbers $x_A, x_B$ and $x_C$, at least two are of size at least $\frac{3^k+1}{2}$, or at least two are of size at most $\frac{3^{k}-1}{2}$. Without loss of generality (possibly after relabeling), we may assume that $x_A$ and $x_B$ have this property. We now proceed according to the two possible cases. 

\medskip

\paragraph{\textbf{Case 1.} $x_A, x_B \ge \frac{3^k+1}{2}$.} We want to estimate as follows:
$$\delta^+(T_{k+1}[X]) \le \delta^+(T_{k+1}[X_B])+x_C.$$
In order to estimate the minimum out-degree of $T_{k+1}[X_B]$, pick a proper subset $S\subseteq X_B$ of size exactly $\frac{3^k-1}{2}$. By inductive assumption (applicable since $T_{k+1}[B_k]$ is isomorphic to $T_k$), we have $\delta^+(T_{k+1}[S]) \le \frac{\frac{3^k-1}{2}-k}{2}$. Since adding a single vertex can increase the minimum out-degree by at most $1$, we conclude that $\delta^+(T_{k+1}[X_B]) \le \frac{\frac{3^k-1}{2}-k}{2}+\left(x_B-\frac{3^k-1}{2}\right)$. Putting things together, we obtain: 
$$\delta^+(T_{k+1}[X]) \le \frac{\frac{3^k-1}{2}-k}{2}+\left(x_B-\frac{3^k-1}{2}\right)+x_C$$
$$=\underbrace{(x_A+x_B+x_C)}_{\le \frac{3^{k+1}-1}{2}}-\underbrace{x_A}_{\ge \frac{3^k+1}{2}}-\frac{\frac{3^k-1}{2}+k}{2}$$
$$\le \frac{3^{k+1}-1}{2}-\frac{3^k+1}{2}-\frac{\frac{3^k-1}{2}+k}{2}$$
$$=\frac{\frac{3^{k+1}-1}{2}-(k+1)}{2},$$ as desired, and this concludes the proof in the first case.

\medskip

\paragraph{\textbf{Case 2.} $x_A, x_B \le \frac{3^k-1}{2}$.} In particular, $|X_A|\le \frac{3^k-1}{2}$, so that we can apply the inductive assumption to the set $X_A$ in the tournament $T_{k+1}[A_k]$, which is isomorphic to $T_k$. We conclude that $\delta^+(T_{k+1}[X_A])\le \frac{\frac{3^k-1}{2}-k}{2}$. We can now estimate the minimum out-degree in $X$ via
$$\delta^+(T_{k+1}[X]) \le \delta^+(T_{k+1}[X_A])+x_B$$
$$\le \frac{\frac{3^k-1}{2}-k}{2}+\frac{3^k-1}{2}$$
$$=\frac{\frac{3^{k+1}-1}{2}-(k+1)}{2},$$ thus proving the desired bound on the minimum degree also in the second case. This concludes the proof of the theorem. 
\end{proof}


\begin{thebibliography}{99}
    \bibitem{alon} N.~Alon. Splitting digraphs. \emph{Combinatorics, Probability \& Computing}, 15, 933--937 (2006). \href{https://doi.org/10.1017/S0963548306008042}{DOI}
    \bibitem{alon2} N.~Alon. A note on subdigraphs of digraphs with large outdegrees. \emph{Discrete Mathematics}, 49(3), 321--322 (1984). 
    \href{https://doi.org/10.1016/0012-365X(84)90172-9}{DOI}
\end{thebibliography}
\end{document}